\documentclass[12pt,a4paper,reqno]{amsart}
\usepackage{amscd}
\usepackage{amssymb}
\usepackage{amsthm}
\usepackage{graphicx}
\usepackage{color}
\usepackage[all]{xy}
\usepackage{mathrsfs}
\usepackage{tikz-cd}
\usepackage{mathtools}
\usepackage{hyperref}

 \newtheorem{theorem}{Theorem}[section]

\newtheorem{corollary}[theorem]{Corollary}
\newtheorem{conjecture}[theorem]{Conjecture}

\newtheoremstyle{defstyle}
  {.6em} 
  {.1em} 
  {} 
  {} 
  {\bfseries} 
  {.} 
  {.5em} 
  {} 
\theoremstyle{defstyle} \newtheorem{definition}[theorem]{Definition}

\newtheorem{example}[theorem]{Example}

\theoremstyle{remark}

\numberwithin{equation}{section}
\numberwithin{figure}{section}

\newcommand{\ZZ} {\mathbb{Z}}
\newcommand{\QQ} {\mathbb{Q}}
\newcommand{\RR} {\mathbb{R}}
\newcommand{\CC} {\mathbb{C}}

\def\PP{\mathbb{P}}

\def\Z{\mathbb{Z}}

\begin{document}

\title[Mock modularity of log Gromov--Witten Invariants]{Mock modularity of log Gromov--Witten Invariants: the mirror to $\PP^2$}

\author[H.\,Arg\"uz]{H\"ulya Arg\"uz}
\address{The Mathematical Institute, University of Oxford, Radcliffe Observatory Quarter, Woodstock Road, Oxford, OX2 6GG, U.K}
\email{Hulya.Arguz@maths.ox.ac.uk}

\date{}

\begin{abstract}
We study modularity properties of generating series of logarithmic Gromov–Witten invariants of elliptic fibrations relative to singular fibers. Motivated by predictions from Vafa–Witten theory, we conjecture that such generating series are mock modular forms. We prove this conjecture for a large class of invariants of the rational elliptic surface mirror to $\PP^2$, relative to a cycle of nine rational curves. The proof uses a correspondence between log Gromov–Witten invariants of the mirror and Vafa–Witten invariants of $\PP^2$ established in previous work joint with Bousseau, together with known mock modularity results on the Vafa–Witten side. 
\end{abstract}

\maketitle

\setcounter{tocdepth}{1}
\tableofcontents
\setcounter{section}{0}
\section{Introduction}
\subsection{Background and context}
Modular forms are complex analytic functions distinguished by rich symmetry properties and occupy a central position in modern number theory. Beyond their intrinsic arithmetic significance, they have emerged in unexpected ways across other areas of mathematics and physics. In particular, over the last $30$ years developments in theoretical physics have led to intriguing predictions that certain enumerative invariants, when organized into generating series, naturally give rise to modular forms. For instance, generating series of Gromov–Witten invariants often exhibit striking modularity properties, as anticipated by mirror symmetry. A seminal example of this phenomenon is provided by the work of Okounkov--Pandharipande \cite{OkPa1,OkPa2} and Oberdieck--Pixton \cite{OPelliptic}, who demonstrated that generating series of Gromov–Witten invariants of an elliptic curve are \emph{quasi-modular forms} -- these are complex analytic functions whose failure of modularity is governed by explicit polynomial correction terms.

Gromov–Witten invariants of an elliptic curve $E$, denoted by $GW^E_{g,d}$ enumerate genus $g$ degree $d$ stable maps to $E$, subject to incidence conditions specified by cohomology classes $\alpha = (\alpha_1,\ldots,\alpha_n) \in H^*(E)$ (for simplicity, we suppress the dependence on $\alpha$ in the notation). The associated generating series are defined by summing over all nonnegative degrees:
\[ F^E_{g}(\tau) = \sum_{d \in \ZZ_{\geq 0}}  GW^E_{g,d} q^d \,, \]
where $q= e^{2i\pi \tau }$. The function $F^E_{g}(\tau)$ is quasi-modular; that is, it is holomorphic on the upper half-plane 
\[\mathbb{H} = \{ \tau \in \CC | \mathrm{Im}(\tau) > 0 \} \,, \] 
with some invariance property with respect to the natural action of $\mathrm{SL}_2(\ZZ)$ on $\mathbb{H} $ \cite{OkPa1,OkPa2}. More generally, Oberdieck--Pixton \cite{OP} predict that generating series of Gromov--Witten invariants of \emph{elliptic fibrations} are quasi-modular forms. 

An elliptic fibration is a flat projective morphism $\phi \colon Y \to B$, where $Y$ and $B$ are smooth projective varieties, and the generic fiber of $\phi$ is a smooth elliptic curve; in general, such fibrations may admit singular fibers. The generating series of Gromov--Witten invariants considered in \cite{OP} are formed by summing over curve classes that differ by a multiple of the class of the elliptic fiber.

In \cite{OP}, Oberdieck–Pixton also conjecture the quasi-modularity of generating series of \emph{relative Gromov–Witten invariants} \cite{JL}, which count curves with given tangency orders relative to a smooth divisor generically fibered by smooth elliptic curves -- see Figure \ref{fig:fig3}. Moreover, they prove their main conjectures in several examples, including rational elliptic surfaces.
\begin{figure}[htbp]
\center{\scalebox{.6}{\input{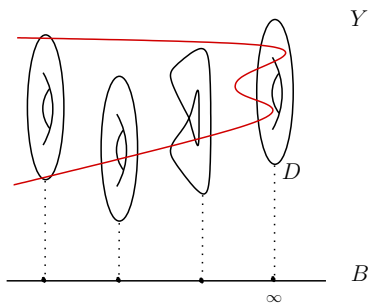}}}
\caption{A rational elliptic surface $Y$ and a curve with tangencies relative to a smooth fiber $D$.}
\label{fig:fig3}
\end{figure}

Logarithmic Gromov–Witten theory, developed by Abramovich--Chen \cite{logGWbyAC,Chen} and Gross--Siebert \cite{logGW}, generalizes the earlier work of Jun Li \cite{JL} on relative Gromov–Witten theory. In this framework, relative incidence conditions may be imposed not only along smooth divisors, but also along divisors with prescribed singularities. In this paper we conjecture \emph{mock modularity} properties of log Gromov--Witten theory of elliptic fibrations $Y \to B$ relative to a singular divisor $D$ generically fibered by singular elliptic curves. 

A mock modular form is a holomorphic function on the upper half-plane whose modularity is recovered only after the addition of a canonical non-holomorphic correction term \cite{zwegers2008mock}. In general, these correction terms can be quite intricate, and their complexity is measured by the \emph{depth} of the mock modular form. In this paper, we study \emph{vector-valued} mock modular forms, that is, vector-valued functions whose components transform under modular transformations as mock modular forms, up to linear combinations among the components; see \S\ref{sec: mock modularity} for details.

While modularity and quasi-modularity have been extensively studied in Gromov–Witten theory, mock modularity has received comparatively little attention in this setting. To our knowledge, mock modularity has thus far appeared only in the study of open Gromov–Witten theory for orbifold quotients of elliptic curves, with no direct connection with logarithmic Gromov–Witten theory; see \cite{bring2, bring1, Lau}.

For elliptic fibrations, viewed as families of elliptic curves, one expects modularity to originate from the modularity of Gromov–Witten invariants of the elliptic curve itself, a phenomenon that can be conceptually explained via mirror symmetry, since the complex moduli space of the mirror elliptic curve is a modular curve, namely the upper half-plane modulo $\mathrm{SL}_2(\ZZ)$; see Dijkgraaf \cite{dijkgraaf1995mirror}. However, in the family setting, especially in the presence of singular fibers and log Gromov–Witten invariants, a conceptual understanding of mock modularity on the log Gromov–Witten side remains elusive. On the Vafa–Witten side, the mathematical picture is similarly incomplete. From the perspective of physics, mock modularity in Vafa–Witten theory is expected to arise as a consequence of the S-duality conjecture, for which a corresponding mathematical explanation is still lacking.

\subsection{Main results}
In this paper we conjecture mock modularity properties of log Gromov--Witten generating series for elliptic fibrations. 

Let $GW^{(Y,D)}_{g,v,\beta}$ denote a log Gromov-Witten count of genus $g$ stable maps in $Y$ of class $\beta \in H_2(Y,\ZZ)$, and with contact order along $D$ prescribed by a vector $v$ with components corresponding to integral points in the tropicalization of $(Y,D)$, given by the dual cone complex of $D$ in $Y$, possibly with insertions of cohomology classes of $Y$ and $D$ (as before, we suppress the insertions from the notation).  

\begin{conjecture}
Let $\phi: Y \to B$ be an elliptic fibration and $D\subset Y$ a normal crossing divisor such that $\phi(D)$ is a divisor in $B$. Assume that the generic fiber of the restricted morphism $\phi|_{D} \colon D \to \phi(D)$ is either a cycle of $\PP^1$'s or a nodal elliptic curve. Then the generating series of logarithmic Gromov–Witten invariants of $(Y,D)$, 
\[ \sum_{d \geq 0} GW^{(Y,D)}_{g,v,\beta_0+dF} q^d \,, \]
where $v$ is a fixed contact order along $D$, $\beta_0 \in H_2(Y,\ZZ)$ is a fixed curve class, and $F$ denotes the fiber class, is a component of a (higher depth) vector-valued mock modular form, up to appropriate normalization in the power of $q$.
\end{conjecture}

Conjecture \ref{conj: mirror P2} is a special case of this conjecture, corresponding to the case where $Y$ is the rational elliptic surface mirror to $\PP^2$. We prove this conjecture in this situation in Theorem \ref{thm:logGW/VW generating} and Corollary \ref{cor:proof of mock log} for a large class of generating series of log Gromov--Witten invariants of $Y$ -- in this setting, $D$ is a fiber of the elliptic fibration on $Y$ consisting of a cycle of nine copies of $\PP^1$ -- see Figure \ref{fig:fig4}. The proof relies on the correspondence between log Gromov–Witten invariants of $(Y,D)$ and Vafa--Witten invariants of $\PP^2$, established in joint work with Bousseau \cite[Theorem 4.1]{ABquiverscurves}.

\begin{figure}[htbp]
\center{\scalebox{.6}{\input{f4.pspdftex}}}
\caption{The rational elliptic surface $Y$, mirror to $\PP^2$, together with a curve having prescribed tangency to the singular fiber $D$.}
\label{fig:fig4}
\end{figure}

For any smooth, projective surface $S$, the Vafa--Witten invariants $VW^S_{r,c_1,c_2}$ are defined using moduli spaces of semistable rank $r$ coherent sheaves on $S$ with first and second Chern classes $c_1$ and $c_2$ \cite{TT, TT1}. In theoretical physics, S-duality of $\mathcal{N} = 4$ super Yang--Mills theory predicts that generating series of Vafa--Witten invariants of the form
\[ \sum_{c_2 \in \Z} VW_{r,c_1,c_2}^S q^{c_2}\]
are, up to a normalization factor of the power of $q$, vector-valued modular forms, when $h^{2,0}(S)>0$, and vector-valued mock modular forms when $h^{2,0}(S)=0$, where $h^{2,0}$ is the $(2,0)$-Hodge number of $S$ \cite{alexandrov2025mock, AP, DPW, Manschot, vafawitten}. Since $h^{2,0}(\PP^2)=0$, the generating series of rank $r$ Vafa–Witten invariants of $\PP^2$ are expected to exhibit mock modular behavior. This expectation was confirmed for $1\leq r \leq 3$ in \cite{Manschot} -- see \S\ref{sec_mock_vw} for a review of the known mock modularity results of Vafa--Witten invariants of $\PP^2$.

The purpose of this paper is to transport this anticipated mock modularity from Vafa–Witten theory to the setting of log Gromov–Witten theory for elliptic fibrations. In Theorem \ref{thm:logGW/VW generating} we prove an equivalence of generating series of Vafa--Witten invariants of $\PP^2$ and log Gromov--Witten invariants of $(Y,D)$, defined in \eqref{eq: vw generating} and \eqref{Eq:log GW generating} respectively. Consequently, in Corollary \ref{cor:proof of mock log} we deduce mock modularity of the log Gromov--Witten generating series. While in Vafa–Witten theory mock modularity originates from four-dimensional gauge theory, on the Gromov–Witten side it manifests as a natural extension of the familiar quasi-modularity phenomena for elliptic fibrations, which arise through mirror symmetry.

\subsection*{Acknowledgement}
I am grateful to Pierrick Bousseau for many helpful discussions, and to Dominic Joyce for posing a question that motivated this work, as well as for his valuable feedback. I also thank the organizers of the Richmond Geometry Meeting for the opportunity to present our research and for organizing the volume to which this paper is submitted. For the purpose of open access, the author has applied a CC BY public copyright licence to any author accepted manuscript arising from this submission.

\section{Mirror symmetry and log Gromov--Witten theory}
We review the definition of log Gromov–Witten invariants, which play a central role in the Gross–Siebert approach to mirror symmetry.
\subsection{The Gross--Siebert program}
The Gross–Siebert program \cite{GSannals,GSintrinsic} provides an algebro-geometric approach to mirror symmetry motivated by the Strominger–Yau–Zaslow conjecture \cite{SYZ}, which, roughly speaking, predicts that mirror pairs of Calabi–Yau varieties admit dual singular torus fibrations. The mirror to a torus fibration on a Calabi–Yau manifold is obtained by compactifying a semi-flat mirror constructed via dualization of the nonsingular torus fibers. Achieving this compactification requires introducing suitable corrections to the complex structure. Kontsevich and Soibelman \cite{KS} in dimension two, and Gross and Siebert \cite{GSannals} in higher dimensions, showed that the determination of these corrections can 
often be reduced to a combinatorial algorithm encoded by a scattering diagram.

In dimension two, Gross, Pandharipande, and Siebert \cite{GPS} further demonstrated that this combinatorial algorithm, producing a \emph{scattering diagram}, admits an interpretation in terms of \emph{log Gromov–Witten invariants} of log Calabi--Yau pairs $(Y,D)$, given by a smooth projective variety $Y$ together with a reduced simple normal crossing divisor $D$ in $Y$ with $K_Y+D=0$. Building on \cite{GPS}, Gross, Hacking, and Keel \cite{GHK} constructed the homogeneous coordinate ring of the mirror to a log Calabi–Yau surface. A generalization of \cite{GPS} to all dimensions is provided in \cite{HDTV} and a general mirror construction in all dimensions in \cite{GSintrinsic}, using scattering diagrams and \emph{punctured log Gromov–Witten theory} of Abramovich--Chen--Gross--Siebert \cite{ACGS}.

\subsection{Log Gromov--Witten invariants}
Given a log Calabi--Yau pair $(Y,D)$, punctured log Gromov--Witten invariants of \cite{ACGS} are counts of curves in $Y$ with prescribed tangency conditions along $D$, which are allowed to be negative. The points at which negative tangency conditions are imposed are referred to as punctured points. Typically, if a map
$f:C\rightarrow Y$ has negative order of tangency with
$D\subseteq Y$ at a punctured point of $C$, then the irreducible
component of $C$ containing this punctured point must map into $D$, and the negative order of tangency is encoded in the data of the log structure. In \cite{HDTV}, we introduce a combinatorial algorithm for computing certain punctured logarithmic Gromov–Witten invariants of a log Calabi--Yau pair $(Y,D)$ in arbitrary dimension. These invariants are defined by counting rational curves with a single punctured marked point mapping to $D$. Such curves—commonly called $\mathbb{A}^1$-curves—play a central role in the Gross–Siebert construction of the canonical scattering diagram, from which mirrors to log Calabi–Yau pairs are produced \cite{HDTV}.

To define counts of $\mathbb{A}^1$-curves in $(Y,D)$, in the particular situation when $Y$ is of complex dimension $2$, we fix $\beta \in H_2(Y,\ZZ) $ and $ v \in \mathrm{Trop}_{\ZZ}(Y,D) $ -- here, 
$\mathrm{Trop}(Y,D)$, called the \emph{tropicalization} of $(Y,D)$ is the dual cone complex of $D$ in $Y$, and $\mathrm{Trop}_{\ZZ}(Y,D) $ is the set of integral points in it. Let $\overline{\mathcal{M}}_{v,\beta}(Y,D)$ denote the moduli space of $\mathbb{A}^1$-curves in $(Y,D)$, whose points correspond to $1$-marked genus $0$ stable log maps to $Y$ with contact order $v$ along $D$ at the marked point. We denote the punctured log Gromov--Witten invariants, corresponding to such counts of $\mathbb{A}^1$-curves in $(Y,D)$ by
\begin{equation} \label{eq_log_gw}
GW_{v,\beta}^{(Y,D)} \coloneqq \int_{[\overline{\mathcal{M}}_{v,\beta}(Y,D)]^{vir}} 1 \in \mathbb{Q} \,. \end{equation}

\section{The rational elliptic surface $(Y,D)$ mirror to $\PP^2$}
\label{sec:rational elliptic}
In this section we review the geometry of the rational elliptic surface $Y$ mirror to $\PP^2$. For more details on the geometry of rational elliptic surfaces that are mirrors to del Pezzo surfaces, see for instance \cite{GGLP}.

Let $\Sigma \subset \RR^2$ be the toric fan whose one-dimensional cones are generated by the primitive integral vectors
\begin{equation*}
\label{eq: rays}
    \mathcal{R} = \{ (1,1),(1,0),(1,-1),(1,-2),(0,-1),(-1,0),(-2,1),(-1,1),(0,1) \} \,. 
\end{equation*}
which are in one-to-one correspondence with the integral lattice points on the boundary of the moment polytope of $\PP^2$ endowed with its anticanonical polarization, as illustrated in Figure \ref{fig:fig1}. Denote by $\overline{Y}$ the corresponding smooth projective toric surface with toric boundary divisor $\overline{D} \subset \overline{Y}$. Note that, since the primitive ray generators of the fan $\Sigma$ of $\overline{Y}$ lie on the boundary of a convex lattice polygon, the anti-canonical divisor $-K_{\overline{Y}} \sim \overline{D}$ is big and nef. Hence, $\overline{Y}$ is a weak del Pezzo surface.


\begin{figure}
\center{\scalebox{.4}{\input{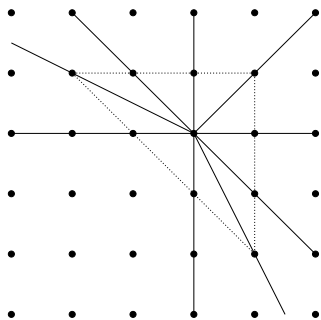}}}
\caption{The fan $\Sigma \subset \RR^2$ for $\overline{Y}$.}
\label{fig:fig1}
\end{figure}

We use the notation $\overline{D}_{(i,j)} $ for the component of $\overline{D}$ corresponding to the ray generated by $(i,j) \in \mathcal{R} $. Pick smooth points of $\overline{D}$, 
\[ x_{(1,1)} \in  \overline{D}_{(1,1)}, \,\  x_{(1,-1)} \in  \overline{D}_{(1,-1)}, \,\  x_{(-2,1)} \in  \overline{D}_{(-2,1)} \,, \]
such that there exists an anti-canonical curve $\overline{C}$ with 
\[ \overline{C} \cap \overline{D} = \{  x_{(1,1)}, x_{(1,-1)},  x_{(-2,1)} \}  \,, \]
as illustrated in Figure \ref{fig:fig2}. 
Let $Y$ be the blow-up of $\overline{Y}$ at these $3$ points and let $D$ denote the strict transform of $\overline{D}$. Then, the pair $(Y,D)$ is a log Calabi--Yau pair, that is, $K_Y+D = 0$. Moreover, $Y$ is a rational elliptic surface, admitting an elliptic fibration given by the map
\begin{equation}
\label{eq:fibration}
    \phi_{|-K_Y|}  \colon Y \longrightarrow | -K_Y | = \PP^1 \,, 
\end{equation}
induced by the linear system of $|-K_Y|$. Note that the divisor $D\subset Y$ is a fiber of $\phi_{|-K_Y|}$, consisting of a cycle of $9$ copies of $\PP^1$, each with self-intersection $-2$. There are exactly $3$ other singular fibers of $\phi_{|-K_Y|}$, each being a nodal elliptic curve.

\begin{figure}[htbp]
\center{\scalebox{.6}{\input{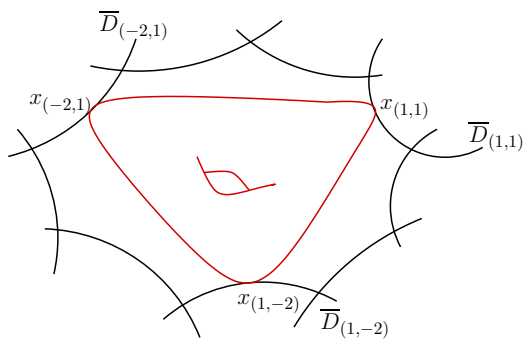}}}
\caption{An anti-canonical curve $\overline{C}$ in $\overline{Y}$.}
\label{fig:fig2}
\end{figure}


It is known that $(Y,D)$ is the \emph{mirror} to $\PP^2$ with a smooth elliptic curve, from several different points of view on mirror symmetry. For instance, the restriction of the fibration $\phi_{|-K_Y|}$ to the torus $(\CC^*)^2\subset Y \setminus D$ is the map 
\begin{align*}
(\CC^*)^2 &  \longrightarrow \CC \\
(x,y)      & \longmapsto x+y+ \frac{1}{xy} \,,
\end{align*}
which is the Hori-Vafa mirror to $\PP^2$ with its toric boundary, and the mirror to $\PP^2$ with a smooth elliptic curve is given by the compactification $Y$. From the point of view of \emph{Homological Mirror Symmetry}, see for instance Auroux--Katzarkov--Orlov \cite{AKO}. From the algebro-geometric point of view on mirror symmetry, within the context of the \emph{Gross--Siebert program} see for instance \cite[Example 6.2]{GHS}, and \cite{GRS,CPS} for various other aspects related to mirror symmetry for $\PP^2$.

Since $(Y,D)$ is a 2-dimensional log Calabi--Yau surface, one can consider the log Gromov--Witten counts of $\mathbb{A}^1$-curves $GW_{v,\beta}^{(Y,D)}$ as defined in \eqref{eq_log_gw}, for any contact order $v$ and curve class $\beta \in H_2(Y,\ZZ)$. 
Because $Y$ is obtained from the toric surface $\overline{Y}$ by a sequence of non-toric blow-ups, the tropicalization $\mathrm{Trop}(Y,D)$ is canonically identified with the tropicalization $\mathrm{Trop}(\overline{Y},\overline{D})$, which in turn may be identified with $\RR^2$ containing the fan of $(\overline{Y},\overline{D})$. In particular, the set $\mathrm{Trop}_\ZZ(Y,D)$ is naturally identified with $\ZZ^2$, and we therefore regard contact orders $v$ simply as elements of $\ZZ^2$.

\section{The log Gromov--Witten/Vafa--Witten correspondence}

We recall the definition of Vafa--Witten invariants of $\PP^2$ and state the correspondence theorem relating them to counts of $\mathbb{A}^1$-curves in $(Y,D)$ discussed in the previous section.
\subsection{Vafa--Witten invariants of $\PP^2$}
\label{section_vw_inv}

The Vafa--Witten invariants 
\[ VW_{r,c_1,c_2}^S \in \QQ \] of a polarized smooth projective surface $S$ are rational numbers introduced by Tanaka-Thomas \cite{TT, TT1}. They are defined via $\CC^\star$-localization on the moduli space of Gieseker semistable Higgs bundles on $S$ with Chern classes $(r,c_1,c_2)$. When $S$ is a del Pezzo surface, for instance when $S=\PP^2$, Tanaka-Thomas prove that  $VW_{r,c_1,c_2}^S$ can be described alternatively in terms of weighted Euler characteristics \cite[\S6.1]{TT}. Therefore, denoting by $K_S$ the non-compact Calabi--Yau 3-fold $K_S$ given by the total space of the canonical line bundle on $S$, we have 
\[VW_{r,c_1,c_2}^S = \overline{\Omega}_{r,c_1,c_2}^{K_S}\,,\]
where $\overline{\Omega}_{r,c_1,c_2}^{K_S}$ are the Donaldson--Thomas invariants of $K_S$ for coherent sheaves supported on $S$. On a general polarized Calabi--Yau 3-fold $X$, Donaldson--Thomas invariants $\overline{\Omega}_\gamma^X \in \QQ$ are defined in terms of weighted Euler characteristics of moduli spaces of semistable coherent sheaves on $X$ with Chern classes $\gamma$. 
These invariants were first introduced by Thomas \cite{Thomas} in the absence of strictly semistable sheaves, and later defined in the general case by Joyce–Song \cite{JoyceSong}. Joyce–Song further conjectured that, for sufficiently generic polarizations, the BPS invariants $\Omega_\gamma^X \in \QQ$ defined via the following multiple cover formula
\[ \overline{\Omega}_\gamma^X = \sum_{\substack{k \in \ZZ\\ \gamma=k\gamma'}} \frac{1}{k^2} \Omega_{\gamma'}^X\,,\]
are in fact integers \cite[Conjecture 6.12]{JoyceSong}. 

When $X=K_S$ and $S$ is a del Pezzo surface, this integrality conjecture was proved by Meinhardt  \cite{meinhardt2015donaldson}.
In this case, the moduli stack of semistable sheaves is smooth, and the BPS invariants $\Omega_\gamma^{K_S}$ admit a description in terms of the intersection cohomology of the moduli spaces of Gieseker semistable sheaves on $S$. Explicitly, denoting by $\mathcal{M}_{\gamma}^{s}$ (resp.\, $\mathcal{M}_{\gamma}^{ss}$) the coarse moduli spaces of Gieseker stable (resp\,. semistable) coherent sheaves on $S$ with Chern classes $\gamma$ \cite{Gieseker, Maruyama1, MaruyamaII, HuybrechtsLehn}, the BPS invariants satisfy
\begin{equation}
\label{eq_bps}
\Omega_{\gamma}^{K_S} = \begin{cases} 
      (-1)^{\dim \mathcal{M}_\gamma^s}\sum_{i \geq 0} (-1)^i \dim \mathrm{IH}^i(\mathcal{M}_{\gamma}^{ss},\QQ) & \mathrm{if} ~~ \mathcal{M}_{\gamma}^{s} \neq 0 \\
      0  & \mathrm{else}
   \end{cases}   \end{equation}
where $\mathrm{IH}^i$ denotes the intersection cohomology groups -- see \cite{GoreskyMacpherson,KirwanWoolf}.

Consequently, in the special case of a del Pezzo surface $S$, the Vafa--Witten invariants $VW_{r,c_1,c_2}^S$ may also be viewed as particular Donaldson--Thomas invariants $\overline{\Omega}_{r,c_1,c_2}^{K_S}$, and can be concretely described via \eqref{eq_bps} in terms of intersection cohomology of the moduli spaces of Gieseker semistable sheaves.
In this paper, we focus on the case $S=\PP^2$. While the definition of Gieseker stability generally depends on a choice of polarization, this dependence disappears when $S=\PP^2$ since any ample line bundle is a power of $\mathcal{O}_{\PP^2}(1)$. 

Moduli spaces of Gieseker semistable sheaves on $\PP^2$ have been studied extensively. 
The moduli space of stable sheaves $\mathcal{M}_{r,c_1,c_2}^{s}$ is a smooth, irreducible, quasi-projective variety of dimension 
\[ \dim \mathcal{M}_{r,c_1,c_2}^s = r^2+c_1^2+3rc_1-2\chi r +1\,,\] 
where $\chi:=r+\frac{1}{2}c_1(c_1+3)-c_2$ is the holomorphic Euler characteristic of a coherent sheaf with Chern classes $(r,c_1,c_2)$. When $\mathcal{M}_{r,c_1,c_2}^{s}=\mathcal{M}_{r,c_1,c_2}^{ss}$ -- for example, when $(r,c_1,c_2)$ is a primitive vector in $\ZZ^3$ --  $\mathcal{M}_{r,c_1,c_2}^{s}$ is also projective but not in general. For arbitrary $(r,c_1,c_2)$, the moduli space of semistable sheaves $\mathcal{M}_{\gamma}^{ss}$ is a projective irreducible variety which may be singular when  $\mathcal{M}_{r,c_1,c_2}^{s}\neq \mathcal{M}_{r,c_1,c_2}^{ss}$.
The irreducibility of the moduli spaces $\mathcal{M}_{r,c_1,c_2}^{ss}$ is a special feature of $\PP^2$ and does not hold for general surfaces;  see \cite{DrezetPotier} for a proof in this case.

\subsection{The log Gromov--Witten/Vafa--Witten correspondence}
\label{sec:log GW/VW}
The correspondence theorem between Vafa--Witten invariants of $\PP^2$ and log Gromov--Witten invariants of $(Y,D)$ is a particular consequence of the general correspondence theorem between quiver DT invariants and log Gromov--Witten invariants of cluster varieties established in \cite{ABquiverscurves}. We review this, focusing on $\PP^2$ below. 

We first explain how the numerical data given by $\gamma = (r,c_1,c_2) \in \ZZ^3$ in the previous section determines a  contact order $v_{\gamma} \in \ZZ^2$ and a curve class $\beta_{\gamma} \in H_2(Y,\ZZ)$. We let
\begin{align}
\label{eq:n123}
n_1 & = - \chi = -r - \frac{1}{2} c_1(c_1+3) +c_2 \,, \\
\nonumber
n_2 & = r + c_1 - \chi = c_1 - \frac{1}{2}c_1(c_1+3)+c_2 \,, \\
\nonumber
n_3 & = r + 2c_1 - \chi =2c_1 -\frac{1}{2}c_1(c_1+3)+c_2  \,.    
\end{align}
Note that $(n_1,n_2,n_3)$ is the dimension vector of a quiver representation corresponding to a coherent sheaf with Chern classes  $(r,c_1,c_2)$ -- see \cite[\S4]{ABquiverscurves} for details. 
Recall that the ray generators of the fan $\Sigma$ defining the toric variety $\overline{Y}$ include $(1,1),(-2,1),(1,-2)$, which correspond to the divisors along which we do non-toric blow-ups to obtain $Y$. We will denote by 
\[  \pi \colon Y \longrightarrow \overline{Y} \,, \]
the blow-up map. Tropical balancing for $\mathbb{A}^1$-curves leads to the definition of the contact order $v_{\gamma} \in \ZZ^2$ by
\begin{equation}
\label{eq: vgamma}
    n_1(1,1) + n_2(-2,1)  + n_3(1,-2) + v_{\gamma} = 0 \,. 
\end{equation}
Hence, we have
\begin{equation}
\label{eq:vgamma2}
v_{\gamma} = (r,r+3c_1) \,,
\end{equation} 
which depends only on $r$ and $c_1$ and not on $c_2$, which will be crucial when discussing mock modularity of generating series of log Gromov--Witten invariants in the next section. Since $\Sigma$ is a complete fan, the vector $v_{\gamma} \in \ZZ^2$ can be uniquely written in the form $v_{\gamma} = a w_1 + b w_2$, for $a,b \in \ZZ_{\geq 0}$ and $w_1,w_2 \in \mathcal{R}$ which span a $2$-dimensional cone of $\Sigma$. As explained in joint work with Gross \cite[Equations (2.18)-(2.19)]{HDTV}, associated to the data of a set of vectors summing up to zero such as in \eqref{eq: vgamma}, there exists a unique toric curve class $\overline{\beta}_{\gamma} \in H_2(\overline{Y},\ZZ)$ satisfying:
\[  \overline{\beta}_{\gamma} \cdot D_w = n_1 \delta_{w,(1,1)} + n_2  \delta_{w,(-2,1)} + n_3 \delta_{w,(1,-2)} + a \delta_{w,w_1} + b \delta_{w,w_2}   \]
where $D_w$ stands for the divisor corresponding to a ray $\RR_{\geq 0} w$ with $w \in \mathcal{R}$ given in \eqref{eq: rays}. Then, we define
\begin{equation}
\label{eq:betagamma}
     \beta_{\gamma} = \pi^* \overline{\beta}_{\gamma} - n_1 E_1 - n_2E_2 -n_3 E_3  \in H_2(Y,\ZZ) \,,
\end{equation}
where $E_1,E_2$ and $E_3$ denote the exceptional divisors obtained by blowing-up the points $x_{(1,1)}, x_{(1,-1)}$ and $x_{(-2,1)}$ respectively, as in \S\ref{sec:rational elliptic}.

In joint work with Bousseau, we proved the following \cite[Theorem 4.1]{ABquiverscurves} \footnote{The statement of \cite[Theorem 4.1]{ABquiverscurves} is slightly wrong, and contains an additional numerical factor not included in \eqref{eq:right}. The problem is due to the fact that when comparing the three dimensional scattering diagram associated to the quiver for $K_{\PP^2}$ with the two dimensional scattering diagram associated to $(Y,D)$, there comes in an extra factor, similar to \cite[Definition 4.5 and Definition 4.7]{GHS}. The rest of the proof remains the same.} relating the Vafa--Witten invariants $VW_{r,c_1,c_2}^{\PP^2}$ reviewed in \S \ref{section_vw_inv} and the log Gromov--Witten counts $GW_{v,\beta}^{(Y,D)}$ of $(Y,D)$ defined in \S\ref{sec:rational elliptic}:

\begin{theorem}
\label{Thm:AB}
For any $\gamma = (r,c_1,c_2) \in \ZZ^3$ such that $r > 0$ and $-1 < \frac{c_1}{r} \leq 0 $, we have
\begin{equation}
\label{eq:right}
     VW_{r,c_1,c_2}^{\PP^2} = (-1)^{\dim \mathcal{M}_{\gamma}^s} \cdot  GW_{v_{\gamma},\beta_{\gamma}}^{(Y,D)} \,,
\end{equation}
    where the contact order $v_{\gamma}\in \ZZ^2$, and the curve class $\beta_{\gamma} \in H_2(Y,\ZZ)$ are given as in \eqref{eq:vgamma2} and \eqref{eq:betagamma}, respectively.
\end{theorem}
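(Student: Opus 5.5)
The plan is to pass through the quiver description of sheaves on $\PP^2$ and match two scattering diagrams: one computing quiver DT invariants of the Beilinson quiver with potential for $K_{\PP^2}$, and one computing counts of $\mathbb{A}^1$-curves in $(Y,D)$. First, for $\gamma=(r,c_1,c_2)$ with $r>0$ and $-1<c_1/r\le 0$, I would use the derived equivalence $D^b(K_{\PP^2})\simeq D^b(\mathrm{mod}\,(Q,W))$ defined by the exceptional collection $\mathcal{O}(-1),\Omega(1),\mathcal{O}$ (up to shift). In this slope window, Gieseker semistable sheaves of class $\gamma$ correspond to King-semistable representations of $(Q,W)$ with dimension vector $(n_1,n_2,n_3)$ from \eqref{eq:n123}, for a suitable stability parameter $\theta_\gamma$. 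This gives $VW^{\PP^2}_{r,c_1,c_2}=\overline{\Omega}^{K_{\PP^2}}_\gamma=\overline{\Omega}^{Q,W,\theta_\gamma}_{(n_1,n_2,n_3)}$, via the identification of Vafa--Witten invariants with DT invariants of $K_{\PP^2}$ recalled in \S\ref{section_vw_inv}. I would cite Beaujard--Manschot--Pioline and Bousseau for the comparison of stability conditions in this range.

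Second, I would express quiver DT invariants through the stability scattering diagram of Bridgeland/Kontsevich--Soibelman in $\mathbb{R}^3=\mathrm{Hom}(\ZZ^3,\RR)$. Its wall-crossing automorphisms are $\exp\big(\sum_k \overline{\Omega}_{k\mathbf{n}}\,\hat z^{k\mathbf n}\big)$, taken in the quantum torus or, after the classical limit, in the Poisson torus. Since the skew form of the Beilinson quiver has rank two, with kernel spanned by $(1,1,1)$, this diagram descends along the projection $\ZZ^3\to\ZZ^2$, $(n_1,n_2,n_3)\mapsto n_1(1,1)+n_2(-2,1)+n_3(1,-2)$. The descended diagram is a consistent two-dimensional diagram whose initial walls are the three rays in directions $(1,1),(-2,1),(1,-2)$, with initial functions $1+t_i z^{m_i}$. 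By the Gross--Pandharipande--Siebert theorem (and its punctured version in \cite{HDTV}), the consistent completion of these initial walls is the canonical scattering diagram of $(Y,D)$. Its wall-crossing functions are generated by $\sum_{\beta} GW^{(Y,D)}_{v,\beta}\, t^{\beta} z^{v}$, where the $t_i$-exponents record the intersections with $E_i$, and these equal $n_i$ by \eqref{eq:betagamma}. The balancing condition \eqref{eq: vgamma} shows that the outgoing ray is $-v_\gamma$-directed with contact order $v_\gamma$, so $\beta_\gamma$ is determined uniquely by the $n_i$.

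Third, I would compare coefficients. The main obstacle is the bookkeeping of normalisations. One must track the sign $(-1)^{\dim\mathcal{M}^s_\gamma}$, which comes from the twist sign in the quantum torus: $\langle\cdot,\cdot\rangle$ on the quiver side versus untwisted monomials on the log side. One must also track the multiple-cover factors $1/k^2$ on both sides, and the extra factor from passing from a three-dimensional to a two-dimensional diagram noted in the footnote, in the spirit of \cite[Definitions 4.5, 4.7]{GHS}. To deal with these, I would first check the identity on the initial rays, where $\gamma$ is the class of $\mathcal{O}$ and the other exceptional objects and both sides equal $\pm1$. This fixes a change of variables $t_i\mapsto -t_i$ together with a sign twist. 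I would then argue that this change of variables intertwines the two consistent completions, and uniqueness of consistent completion gives the equality for all $\gamma$ in the chamber. Finally, I would verify that $\dim\mathcal{M}^s_\gamma\equiv \langle\cdot\rangle$-parity of $(n_1,n_2,n_3)$, which yields exactly \eqref{eq:right}.
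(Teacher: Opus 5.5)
Your outline follows the route behind the cited \cite[Theorem 4.1]{ABquiverscurves}, whose normalization the footnote corrects; the paper itself does not reprove the statement. The ingredients are the quiver description of the sheaves, the stability scattering diagram in $\RR^3$, and reduction to $\RR^2$ because the skew form factors through $\pi\colon\ZZ^3\to\ZZ^2$, with $t$-variables remembering $(n_1,n_2,n_3)$. The last step is identification with the canonical scattering diagram of $(Y,D)$ via the tropical vertex. However, two steps that carry the actual content are asserted rather than proved.

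\textbf{Missing attractor-invariant input.} That the reduced diagram is the consistent completion of only the three walls $1+t_iz^{m_i}$ is not a formal property of stability scattering diagrams. For a quiver with potential, the stability scattering diagram is the consistent completion of initial data given by the \emph{attractor invariants}. It reduces to the walls of the three simple objects only if those invariants are trivial, apart from multiples of $(1,1,1)$, which are central for the skew form and act trivially. For local $\PP^2$ this is a genuine theorem (conjectured by Beaujard--Manschot--Pioline). It is also exactly the hypothesis of the general quivers/curves correspondence in \cite{ABquiverscurves}. Your proposal never invokes it.

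Relatedly, the planar diagram only sees the wall functions on $\gamma^\perp$ near the locus $\theta(1,1,1)=0$, i.e.\ near the line spanned by $\langle\gamma,\cdot\rangle$. You must therefore check that the $\theta_\gamma$ from your first step lies in the chamber of $\gamma^\perp$ containing the half of that line opposite the attractor point; on the attractor side the invariants vanish.

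\textbf{Normalization.} Your proposed method does not settle it. Uniqueness of consistent completion gives equality of wall-crossing automorphisms, not of invariants. Checking the initial rays cannot detect the conversion factors, because there every class is primitive both in $\ZZ^3$ and in $\ZZ^2$. The dangerous factor arises because $\pi(\ZZ^3)$ has index $3$ in $\ZZ^2$, so $v_\gamma$ can be divisible when the dimension vector is primitive. For example, $\gamma=(3,-1,2)$ has dimension vector $(0,2,1)$ but $v_\gamma=(3,0)$.

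You have to compare the formulas directly. Take $\gamma_0$ primitive with dimension vector $n_0$ and $\pi(\gamma_0)=d\,u$, $u$ primitive.
\begin{itemize}
\item Up to a global sign convention, the quiver wall multiplies $t^{m}z^{\pi(m)}$ by $\exp\bigl(\sum_{\ell}\ell\,\sigma(\ell\gamma_0)\,\overline{\Omega}_{\ell\gamma_0}\,\langle\gamma_0,m\rangle\,t^{\ell n_0}z^{\ell d u}\bigr)$, where $\langle\gamma_0,m\rangle=d\det(u,\pi(m))$.
\item The GHK/GPS wall function multiplies it by $\exp\bigl(\pm\det(u,\pi(m))\sum_\beta k_\beta N_\beta\,t^{\beta}z^{k_\beta u}\bigr)$, where $N_\beta=GW^{(Y,D)}_{v,\beta}$ and the contact-order factor is $k_\beta=\ell d$, not $\ell$.
\end{itemize}
Equivalently, both are Hamiltonian flows of $\sum(\text{invariant})\,t^{n}z^{v}$ for the bracket $\det$. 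The factors $\ell d$ cancel, which is why the statement has no extra numerical factor. Since the footnote says exactly this went wrong in the published version, ``one must track'' is the missing step, not mere bookkeeping.

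For the sign, $(-1)^{\dim\mathcal{M}^s_\gamma}=(-1)^{(r+1)(c_1+1)}$, which is $-1$ exactly when $v_\gamma\in 2\ZZ^2$. That is, it equals $-\sigma(v_\gamma)$ for the quadratic refinement $\sigma(a,b)=(-1)^{ab+a+b}$. A consistency check is $\gamma=(2,0,0)$: here $\overline{\Omega}=\tfrac14$, $\beta_\gamma=2E_1$ has $GW=-\tfrac14$ by the multiple-cover formula, and the formal dimension is $-3$.
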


The condition $-1 < \frac{c_1}{r} \leq 0 $ in the statement of Theorem~ \ref{Thm:AB} may be assumed without loss of generality, since tensoring coherent sheaves contributing to $ \overline{\Omega}_{\gamma}$ by a line bundle $\mathcal{O}(k)$, for some $k \in \ZZ$
, leads to a moduli space isomorphic to $ \mathcal{M}_{\gamma}^s$, and tensoring by $\mathcal{O}(k)$ changes $\frac{c_1}{r}$ to $\frac{c_1}{r} + k$.

For a heuristic explanation for why the correspondence in Theorem \ref{Thm:AB} is natural see \cite[\S4.1]{ABquiverscurves} or \cite[\S8]{Bousseau-Takahashi}. Roughly speaking, Homological Mirror Symmetry suggests that stable sheaves on $\PP^2$ should correspond to special Lagrangian submanifolds in $Y \setminus D$ with boundaries on an elliptic curve, arising as a fiber of $\phi_{|-K_Y|}$. See Thomas--Yau \cite{TY} for the general discussion on the relation between stable sheaves and special Lagrangians under mirror symmetry. After an appropriate hyper-K\"ahler rotation of the complex structure on $Y \setminus D$, these special Lagrangians should turn into holomorphic curves, now with boundary on a fiber of an SYZ special Lagrangian fibration on $Y \setminus D$. The log Gromov--Witten invariants of $(Y,D)$ are algebro-geometric analogues of such holomorphic curves.

\section{Mock modularity of log Gromov--Witten generating series}
After discussing mock modularity in \S\ref{sec: mock modularity} and known mock modularity results for generating series of Vafa--Witten invariants of $\PP^2$ in \S\ref{sec_mock_vw}, we describe generating series of log Gromov--Witten invariants and prove their mock modularity in \S\ref{subsec: mocklog}.

\subsection{Modularity and mock modularity}
\label{sec: mock modularity}
The following is the standard definition of a modular form with integer weights.

\begin{definition}
\label{defn:modular fnc}
Let $\mathbb{H} = \{  \tau \in \CC ~ | ~ \mathrm{Im} \tau > 0 \}$ be the upper half-plane. A \emph{modular form of weight $k \in \ZZ$} is a holomorphic function $f \colon \mathbb{H} \to  \mathbb{C}$ satisfying the following property:
\begin{equation}
\label{eq: modular}
    f\left(\frac{a\tau+b}{c \tau+d}\right)=(c\tau+d)^k f(\tau) ~~
\mathrm{for ~ every} ~ 
\begin{pmatrix}
 a & b \\
 c & d
\end{pmatrix}
\in \mathrm{SL}_2(\ZZ) \,.
\end{equation}
\end{definition}

One can analogously define modular forms with \emph{fractional weights}, by allowing an extra phase in \eqref{eq: modular} -- see for instance \cite[\S3.2]{Manschot}.

\begin{example}
The \emph{Dedekind eta function} defined by
\begin{equation} \label{eq_eta}
\eta(\tau):= q^{\frac{1}{24} } \prod_{n=1}^\infty (1-q^n)=q^{\frac{1}{24}}
(1-q-q^2+q^5+q^7-q^{12}+\dots)
\end{equation}
is a modular form of weight $1/2$ for $\mathrm{SL}_2(\ZZ)$.
\end{example}

One can also define \emph{vector-valued modular forms} as follows.
\begin{definition}
\label{def:vector valued}
A \emph{$d$-dimensional vector-valued modular form of weight $k$} is a vector of holomorphic functions 
\[ \vec{f} = \begin{pmatrix}
f_0 \\
\vdots \\
f_{d-1}
\end{pmatrix} \colon \mathbb{H} \to \mathbb{C}^d \,,
  \]
satisfying the following property:
\begin{align*}
\vec{f} \begin{pmatrix} \frac{-1}{\tau} \end{pmatrix} & = \tau^k M_1 \vec{f}(\tau), ~ \mathrm{and} \\
    \vec{f} (\tau + 1 ) & = M_2 \vec{f}(\tau) \,,
\end{align*}
for some $d \times d$ matrices $M_1,M_2$.
\end{definition}

Mock modular forms are holomorphic functions that fail to be modular, but whose modularity is recovered upon adding an appropriate non-holomorphic completion. The following definition can be found in \cite[\S 3.3]{Manschot}, where the authors consider more general mock modular forms with an additional parameter $\ell$, which we set to be equal to zero -- see also \cite{zwegers2008mock}.

\begin{definition}
\label{defn: mock modular}
Let $k \in \frac{1}{2} \ZZ$. A \emph{mock modular form} of weight $k$ is a holomorphic function $f \colon \mathbb{H} \to \mathbb{C}$ for which there exists a modular form $g$ of weight $2 -k$ such that the non-holomorphic function 
\[ \hat{f} (\tau,\overline{\tau}) \coloneqq f(\tau) -2^{1-k}i\int_{- \overline{\tau}}^{i\infty} \frac{g(v)}{(-i(v + \tau))^k} dv  \]
transforms as a modular form of weight $k$, that is,
\begin{align*}
  \hat{f}\left(\frac{a\tau+b}{c \tau+d} , \frac{a \overline{\tau}+b}{c \overline{\tau}+d}  \right ) & = \epsilon(M) (c \tau + d )^k \hat{f} (\tau, \overline{\tau}) \,,  
\end{align*}
for every $
M = \begin{pmatrix}
 a & b \\
 c & d
\end{pmatrix}
\in \mathrm{SL}_2(\ZZ)$, and for some $\epsilon(M) \in \CC^*$.
The function $\hat{f}$ is known as the \emph{non-holomorphic completion} of $f$, and the associated function $g$ is referred to as the \emph{shadow} of $f$.
\end{definition}

A mock modular form in the sense of Definition~\ref{defn: mock modular} is also referred to as a mock modular form of depth~$1$. In the following sections, we will study generating series of enumerative invariants whose mock modularity properties are governed by mock modular forms of higher depth, defined recursively as follows.

\begin{definition}
\label{defn:mock modular higher weight}
    Let $k \in \frac{1}{2} \ZZ$. A \emph{mock modular form} of weight $k$ and depth $r$ is a holomorphic function $f \colon \mathbb{H} \to \mathbb{C}$ such that there exists functions $\hat{f}_1(\tau,\overline{\tau}),\ldots, \hat{f}_n(\tau,\overline{\tau}) $, which are non-holomorphic modular completions of mock modular forms of weight $k$ and depth $r-1$, and $g_1(\tau), \ldots,g_n(\tau)$, which are modular of weight $2-k$, such that the non-holomorphic function 
\[ \hat{f} (\tau,\overline{\tau}) \coloneqq f(\tau) -2^{1-k}i \sum_{j=1} ^n\int_{- \overline{\tau}}^{i\infty} \frac{\hat{f}_j(\tau-v)g_j(v)}{(-i(v + \tau))^k} dv  \]
transforms as a modular form of weight $k$, that is,
\begin{align*}
  \hat{f}\left(\frac{a\tau+b}{c \tau+d} , \frac{a \overline{\tau}+b}{c \overline{\tau}+d}  \right ) & = \epsilon(M) (c \tau + d )^k \hat{f} (\tau, \overline{\tau}) \,,  ~~ 
\end{align*}
for every $
M = \begin{pmatrix}
 a & b \\
 c & d
\end{pmatrix}
\in \mathrm{SL}_2(\ZZ)$, and for some $\epsilon(M) \in \CC^*$.
\end{definition}

One can also define vector-valued mock modular forms, analogously as in Definition \ref{def:vector valued}.

\subsection{Mock modularity and Vafa--Witten invariants of $\PP^2$.}
\label{sec_mock_vw}

The conjectural S-duality of $\mathcal{N}=4$ super Yang-–Mills theory in theoretical physics predicts that generating series of Vafa-–Witten invariants of smooth projective surfaces exhibit modular behavior \cite{vafawitten}. When the surface 
$S$ satisfies $h^{2,0}(S)>0$, these generating series are expected to be genuine modular forms; see, for example, \cite{GKrefined, GKL, JK} for recent progress in this direction.

In contrast, for surfaces with $h^{2,0}(S)=0$, the modular transformation properties are expected to be more intricate and typically involve mock modular forms. In the case of del Pezzo surfaces, it is predicted that the generating series of rank 
$r$ Vafa–Witten invariants assemble into vector-valued mock modular forms of weight 
$-\frac{\chi}{2}$ and depth $r-1$, where $\chi$ denotes the topological Euler characteristic of the surface \cite{alexandrov2025mock, AP, Manschot}.
We briefly review some existing results on the mock modularity of generating series of Vafa--Witten invariants of $\PP^2$ \cite{Manschot}.

For every $r \in \ZZ_{>0}$ and $c_1\in \ZZ$, we consider the \emph{Vafa--Witten generating series}
\begin{equation}
\label{eq: vw generating}
   h^{VW}_{r,c_1}(\tau):= \sum_{c_2 \in \ZZ} (-1)^{\dim \mathcal{M}^s_{r,c_1,c_2}} VW_{r,c_1,c_2}^{\PP^2} q^{c_2+\frac{c_1^2}{2r}-\frac{r}{8}}\,,  
\end{equation}
where $q=e^{2i\pi \tau}$, and $VW_{r,c_1,c_2}^{\PP^2}$ are the Vafa--Witten invariants of $\PP^2$ -- see \cite[Equation (6.2)]{Manschot}. 
Since tensoring a rank $r$ coherent sheaf with the line bundle
 $\mathcal{O}(k)$ shifts its first Chern class from 
 $c_1$ to $c_1+kr$, it follows that
$h^{VW}_{r,c_1}=h^{VW}_{r,c_1+kr}$ for all $k \in \ZZ$, and so it is sufficient to consider the series $h^{VW}_{r,c_1}(\tau)$ for $0 \leq c_1 <r$. 

The vector of functions $(h^{VW}_{r,c_1}(\tau))_{0 \leq c_1<r}$ is expected to transform under the action of $SL(2,\ZZ)$ on the variable $\tau$ as a vector-valued mock modular form of weight $-\frac{3}{2}$ and depth $r-1$  \cite{AP, alexandrov2025mock, Manschot}. This has been checked by explicit calculations for $r \leq 3$ \cite{Manschot}. For $r=1$, $h_{1,0}(\tau)$ is the generating series of Euler characteristics of Hilbert schemes of points on $\PP^2$, and so is expressed by G\"ottsche formula \cite{gottsche} in terms of the eta function $\eta(\tau)$ reviewed in
\eqref{eq_eta} as
\[h_{1,0}(\tau) = \frac{1}{\eta(\tau)^3}\,,\]
which is indeed a modular form of weight $-\frac{3}{2}$.
For $r=2$ and $c_1\in \{0,1\}$, we have \cite{yoshioka, klyachko},
\[ h_{2,c_1}(\tau) = \frac{f_{2,c_1}(\tau)}{\eta(\tau)^6}\,,\]
where 
\[ f_{2,c_1}(\tau)=3 \sum_{n \geq 0} H(4n-c_1)q^{n-\frac{c_1}{4}} \,,\]
where $H(m)$ is the Hurwitz class number, that is the weighted count of equivalence classes of integral binary quadratic forms of discriminant 
$m$. Generating functions of Hurwitz numbers were historically among the first examples of mock modular forms \cite{zagier, zwegers2008mock}. 
Indeed, the vector $(f_{2,c_1}(\tau))_{0 \leq c_1 \leq 1}$ is a vector-valued mock modular form of weight $\frac{3}{2}$ and depth $1$, with non-holomorphic modular completion given by
\[ \hat{f}_{2,c_1}(\tau, \overline{\tau})=f_{2,c_1}(\tau)-\frac{3i}{4\sqrt{2}\pi} \int_{-\overline{\tau}}^{i\infty} \frac{\vartheta_{\frac{c_1}{2}}(v)}{(-i(v+\tau))^{\frac{3}{2}}}dv \,,\]
where the shadow $\vartheta_{\frac{c_1}{2}}(\tau)$
is the theta function
\[ \vartheta_{\frac{c_1}{2}}(\tau) = \sum_{n \in \Z +\frac{c_1}{2}} q^{\frac{n^2}{2}}\,,\]
which is modular of weight $2-\frac{3}{2}=\frac{1}{2}$.
Since $(f_{2,c_1}(\tau))_{0 \leq c_1 \leq 1}$ is a vector-valued mock modular form of weight $\frac{3}{2}$ and $\eta(\tau)$ is modular of weight $\frac{1}{2}$, $(h_{2,c_1}(\tau))_{0 \leq c_1 \leq 1}$ is a vector-valued mock modular form of weight $\frac{3}{2}-6 \times \frac{1}{2}=-\frac{3}{2}$.

For $r=3$, the series $h_{3,c_1}(\tau)$ have been computed by Manschot \cite{manschot1, manschot2}, who also established their mock-modular behavior \cite{Manschot}. The result is of the form 
\[ h_{3,c_1}(\tau)=\frac{f_{3,c_1}(\tau)}{\eta(\tau)^9}\,,\]
where the function $f_{3,c_1}(\tau)$ is given in a complicated but explicit way in \cite[\S 6.3]{Manschot}. It is proved in \cite{Manschot} that $(f_{3,c_1}(\tau))_{0 \leq c_1 \leq 2}$ is a vector-valued mock modular form of weight $\frac{3}{2}$ and depth $2$. For instance, the non-holomorphic modular completion of $f_{3,0}(\tau)$ is given by 
\[ \hat{f}_{3,0}(\tau, \overline{\tau})= f_{3,0}(\tau)-\frac{i}{\pi}\left(\frac{3}{2}\right)^{\frac{3}{2}} \sum_{j=0}^1 
\int_{-\overline{\tau}}^{i\infty} 
\frac{\hat{f}_{2,j}(\tau,-v) \vartheta_{\frac{j}{2}}(3v)}{(-i(\tau+v))^{\frac{3}{2}}}dv\,.\]
For general $r$, the series $h^{VW}_{r,c_1}(\tau)$ can be calculated and expressed in terms of indefinite theta functions of a lattice of indefinite signature $(r,r)$ \cite{manschot2}. However, a detailed proof of their mock modularity properties does not seem to have been completed yet.

\subsection{Mock modularity of log Gromov--Witten invariants of the mirror $(Y,D)$ to $\PP^2$}
\label{subsec: mocklog}

We define a generating series of log Gromov--Witten invariants as follows:

\begin{definition}
Let $(Y,D)$ be the rational elliptic surface mirror to $\PP^2$ as in \S\ref{sec:rational elliptic}. Fix $r \in \ZZ_{\geq 0}$ and $c_1 \in \ZZ$. Let $v= (r,r+3c_1) \in \ZZ^2$ and $\beta_{(r,c_1,0)} \in H_2(Y,\ZZ)$ be the contact order and curve class obtained from the numerical data $\gamma = (r,c_1,c_2=0) \in \ZZ^3$, as in \S\ref{sec:log GW/VW}. Define the \emph{log Gromov--Witten generating series} for $(Y,D)$ by
\begin{equation}
\label{Eq:log GW generating}
     h^{GW}_{r,c_1}(\tau):= \sum_{c_2 \in \ZZ} GW^{(Y,D)}_{(r,r+3c_1),\beta_{(r,c_1,0)} + c_2F } q^{c_2 + \frac{c_1^2}{2r} - \frac{r}{8} } \,,
\end{equation}
where $F$ denotes the class of an elliptic fiber of the map  $\phi_{|-K_Y|}  \colon Y \to \PP^1$ as in \eqref{eq:fibration}, and $q= e^{2i\pi\tau}$.
\end{definition}

\begin{theorem}
\label{thm:logGW/VW generating}
    For any $r \in \ZZ_{\geq 0}$ and $c_1 \in \ZZ$ with $-1 < \frac{c_1}{r} \leq 0$, the log Gromov--Witten generating series for $(Y,D)$ and the Vafa--Witten generating series for $\PP^2$, given in Equations \eqref{Eq:log GW generating} and \eqref{eq: vw generating} respectively are equal:
   \[ h^{GW}_{r,c_1}(\tau) = h^{VW}_{r,c_1}(\tau) \,. \]
\end{theorem}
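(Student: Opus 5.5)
Comparing \eqref{Eq:log GW generating} and \eqref{eq: vw generating}, both series carry the same exponent $c_2+\frac{c_1^2}{2r}-\frac{r}{8}$ for each $c_2 \in \ZZ$. The plan is therefore to compare the coefficients term by term. Fix $r>0$, $c_1$ with $-1<\frac{c_1}{r}\le 0$, and $c_2\in\ZZ$, and set $\gamma=(r,c_1,c_2)$. By Theorem \ref{Thm:AB},
\[ (-1)^{\dim \mathcal{M}^s_\gamma} VW^{\PP^2}_{r,c_1,c_2} = (-1)^{2\dim\mathcal{M}^s_\gamma} GW^{(Y,D)}_{v_\gamma,\beta_\gamma} = GW^{(Y,D)}_{v_\gamma,\beta_\gamma}. \]
By \eqref{eq:vgamma2}, $v_\gamma=(r,r+3c_1)$ is independent of $c_2$. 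It therefore suffices to prove the identity of curve classes
\[ \beta_{(r,c_1,c_2)} = \beta_{(r,c_1,0)} + c_2 F \quad \text{in } H_2(Y,\ZZ). \]
The degenerate case $r=0$ is vacuous, since the condition $-1<c_1/r\le 0$ cannot hold. So I would only treat $r>0$.

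\textbf{Computing $\beta_\gamma$ as a function of $c_2$.} From \eqref{eq:n123}, increasing $c_2$ by one increases each of $n_1,n_2,n_3$ by one, while $v_\gamma$ and hence $a,b,w_1,w_2$ stay fixed. The toric class $\overline{\beta}_\gamma$ is uniquely determined by its intersection numbers with the toric divisors $D_w$. Hence
\[ \overline{\beta}_{(r,c_1,c_2)} - \overline{\beta}_{(r,c_1,0)} = c_2\,\overline{\beta}_0, \]
where $\overline{\beta}_0$ is the toric class with $\overline{\beta}_0\cdot D_w = \delta_{w,(1,1)}+\delta_{w,(-2,1)}+\delta_{w,(1,-2)}$. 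This class corresponds to the balanced configuration $(1,1)+(-2,1)+(1,-2)=0$, which is also the $a=b=0$ case of \eqref{eq: vgamma}. By \eqref{eq:betagamma} we then get
\[ \beta_{(r,c_1,c_2)}-\beta_{(r,c_1,0)} = c_2\bigl(\pi^*\overline{\beta}_0 - E_1-E_2-E_3\bigr). \]

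\textbf{Identifying the fiber class.} It remains to show $\pi^*\overline{\beta}_0 - E_1-E_2-E_3 = F$. This is the main (though elementary) step. The fiber $F$ is the class of the strict transform of the anticanonical curve $\overline{C}$, which passes through the three blown-up points. Thus $F=\pi^*[\overline{C}]-E_1-E_2-E_3$, provided $\overline{C}$ is smooth at these points; a general member of the pencil is.

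So one must check $[\overline{C}]=\overline{\beta}_0$. Since $\overline{C}\sim -K_{\overline{Y}}\sim\overline{D}$ and $\overline{C}$ meets $\overline{D}$ only transversally at the three points $x_{(1,1)},x_{(1,-1)},x_{(-2,1)}$, we have $[\overline{C}]\cdot D_w=\delta_{w,(1,1)}+\delta_{w,(-2,1)}+\delta_{w,(1,-2)}$, after matching the labels of the blown-up divisors with the rays in \eqref{eq: vgamma}. Because toric curve classes on the smooth complete toric surface $\overline{Y}$ are determined by their intersections with the boundary divisors, this gives $[\overline{C}]=\overline{\beta}_0$.

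As a consistency check, the result is compatible with $F\cdot D_w = 0$ on $Y$ and with $F^2=0$: we have $\overline{C}^2=K_{\overline{Y}}^2=3$, and subtracting the three exceptional curves gives $3-3=0$. Summing the coefficient identities over all $c_2$ then yields $h^{GW}_{r,c_1}=h^{VW}_{r,c_1}$.
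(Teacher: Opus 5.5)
Your proof is correct and follows the same route as the paper. You compare coefficients term by term using Theorem \ref{Thm:AB}, note that $v_\gamma$ does not depend on $c_2$, and reduce to $\beta_{(r,c_1,c_2)}=\beta_{(r,c_1,0)}+c_2\beta_{(0,0,1)}$ with $\beta_{(0,0,1)}=F$. You also explicitly verify $\beta_{(0,0,1)}=\pi^*[-K_{\overline{Y}}]-E_1-E_2-E_3=-K_Y=F$, which the paper only asserts, and you correctly resolve the inconsistent labelling of the blown-up divisors ($(1,-1)$ versus $(1,-2)$), which does not affect the increment in $c_2$.
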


\begin{proof}
By Theorem \ref{Thm:AB}, we have for any $\gamma = (r,c_1,c_2)$ with $-1 < \frac{c_1}{r} \leq 0$:
\[ (-1)^{\dim \mathcal{M}_{\gamma}^s} VW_{\gamma}^{\PP^2} = GW^{(Y,D)}_{v_{\gamma},\beta_{\gamma}} \,. \]
As explained in \S\ref{sec:log GW/VW}, we have $v_{\gamma} = (r,r+3c_1) $, and it follows from the definition of $\beta_{\gamma}$ that
    \[  \beta_{\gamma} = \beta_{(r,c_1,c_2)} = \beta_{(r,c_1,0)} + c_2 \beta_{(0,0,1)}  \,. \]
Moreover, by \eqref{eq:n123} we have $\beta_{(0,0,1)} = F$. Hence, 
\[ GW^{(Y,D)}_{v_{\gamma},\beta_{\gamma}} = GW^{(Y,D)}_{(r,r+3c_1),\beta_{(r,c_1,0)} + c_2F } \,, \] 
and hence the equality of the log Gromov--Witten and Vafa--Witten generating series, given in Equations \eqref{Eq:log GW generating} and \eqref{eq: vw generating}, follows. 
\end{proof}

The following is an immediate corollary of Theorem \ref{thm:logGW/VW generating}, together with the known mock modularity results of generating series of Vafa--Witten invariants of $\PP^2$ reviewed in \S\ref{sec_mock_vw}.

\begin{corollary}
\label{cor:proof of mock log}
Let $(Y,D)$ be the rational elliptic surface mirror to $\PP^2$ as in \S\ref{sec:rational elliptic}.
For any $1 \leq r \leq 3$, the vector $(h^{GW}_{r,c_1}(\tau))_{-r< c_1 \leq 0}$, with components the log Gromov--Witten generating series of $(Y,D)$ given in \eqref{Eq:log GW generating}, is a vector-valued mock modular form of weight $-\frac{3}{2}$ and depth $r$.
\end{corollary}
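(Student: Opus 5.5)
The plan is to transport the mock modularity of the Vafa--Witten generating series to the log Gromov--Witten side, component by component, using Theorem \ref{thm:logGW/VW generating}.

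\textbf{Step 1: identify the components.} For each $1 \leq r \leq 3$ and each $c_1$ with $-r < c_1 \leq 0$, the hypothesis $-1 < \frac{c_1}{r} \leq 0$ holds. Theorem \ref{thm:logGW/VW generating} then gives $h^{GW}_{r,c_1}(\tau) = h^{VW}_{r,c_1}(\tau)$. Hence the vector $(h^{GW}_{r,c_1})_{-r<c_1\leq 0}$ coincides with $(h^{VW}_{r,c_1})_{-r<c_1\leq 0}$.

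\textbf{Step 2: reindex.} By the periodicity $h^{VW}_{r,c_1} = h^{VW}_{r,c_1+r}$ from tensoring with $\mathcal{O}(1)$, this vector is a reindexing of $(h^{VW}_{r,c_1})_{0\leq c_1<r}$ via $c_1 \mapsto c_1 + r$ for $c_1<0$. The normalising exponent $\frac{c_1^2}{2r}-\frac{r}{8}$ changes under $c_1\mapsto c_1+r$ by $c_1+\frac{r}{2}$. This shift lies in $\ZZ + \frac{r}{2}$, so it must be absorbed by the corresponding shift of $c_2$; this is the Chern-class bookkeeping that makes the periodicity hold. A permutation of components conjugates the matrices $M_1, M_2$ in Definition \ref{def:vector valued} by a permutation matrix, and it permutes the completions term by term. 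So vector-valued (mock) modularity of a given weight and depth is preserved.

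\textbf{Step 3: invoke the known results of \S\ref{sec_mock_vw}, case by case.}
\begin{itemize}
\item For $r=1$, $h_{1,0} = \eta^{-3}$ by G\"ottsche's formula. This is modular of weight $-\frac{3}{2}$, which we regard as a degenerate mock modular form with vanishing shadow.
\item For $r=2$, $h_{2,c_1} = f_{2,c_1}/\eta^6$. Here $(f_{2,c_1})$ is a depth-one vector-valued mock modular form of weight $\frac32$ with theta-function shadows. Dividing by the weight-$3$ modular form $\eta^6$ (nonvanishing on $\mathbb{H}$) yields weight $-\frac{3}{2}$. The completion is $\hat f_{2,c_1}/\eta^6$, and the multiplier $\epsilon(M)$ absorbs the eta multiplier.
\item For $r=3$, Manschot's theorem \cite{Manschot} gives that $(f_{3,c_1})$ is vector-valued mock modular of weight $\frac32$ and depth $2$. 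Dividing by $\eta^9$ gives weight $-\frac32$.
\end{itemize}

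\textbf{Main obstacle: the depth bookkeeping.} The literature gives depth $r-1$: depth $0$ for $r=1$, $1$ for $r=2$, $2$ for $r=3$. The statement asserts depth $r$. I would reconcile these by checking Definition \ref{defn:mock modular higher weight}. A mock modular form of depth $d$ is in particular one of depth $d+1$, obtained by adding a zero correction term ($g_j=0$) at the top of the recursion. So the depth-$(r-1)$ result implies the stated one, provided depth is read as an upper bound.

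The other delicate point is division by powers of $\eta$. The completion of $h$ is $\hat f/\eta^{3r}$, whereas Definition \ref{defn:mock modular higher weight} writes corrections with $\hat f_j(\tau - v)$ in a specific form. I would verify this by pulling $\eta(\tau)^{-3r}$ out of the integral, noting that it depends only on $\tau$ and not on the integration variable $v$. This confirms the completion has the required shape and transforms with weight $\frac32 - \frac{3r}{2} = -\frac32$ for $r=2$, and with weight $\frac32 - \frac{9}{2} = -3$ for $r=3$.

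That last computation contradicts the claimed weight $-\frac32$ at $r=3$. I would recheck it against the convention that $\eta$ carries weight $\frac12$. If it holds, I would record the weight as $\frac32-\frac{3r}{2}$ there, or flag the discrepancy with \cite{Manschot}.
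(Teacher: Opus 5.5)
Your route matches the paper's, which obtains the corollary by combining Theorem~\ref{thm:logGW/VW generating} with the mock modularity of $(h^{VW}_{r,c_1})$ for $r\le 3$ recalled in \S\ref{sec_mock_vw}. Your handling of the depth is acceptable: depth $r-1$ implies depth $r$ by padding with zero shadows.

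\textbf{The $r=3$ case is not proved.} You arrive at weight $\frac{3}{2}-\frac{9}{2}=-3$ and suggest amending the claim. The fault is in the input you used, not in the statement: the weight $\frac{3}{2}$ quoted for $f_{3,c_1}$ is a misprint. Manschot's $f_{3,c_1}=\eta^9 h_{3,c_1}$ is mock modular of weight $3$ and depth $2$. You can see this from the completion displayed in \S\ref{sec_mock_vw} itself. Its correction term pairs the weight-$\frac{3}{2}$ completion $\hat{f}_{2,j}$ with a weight-$\frac{3}{2}$ Eichler-type integral of the weight-$\frac{1}{2}$ theta series $\vartheta_{j/2}(3v)$, so $\hat{f}_{3,0}$ has weight $3$. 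In general $f_{r,c_1}$ has weight $\frac{3r}{2}-\frac{3}{2}$, i.e.\ $0,\frac{3}{2},3$ for $r=1,2,3$. Your uniform formula $\frac{3}{2}-\frac{3r}{2}$ already fails at $r=1$: it gives $0$, not the weight $-\frac{3}{2}$ of $\eta^{-3}$ that you found correctly. That should have signalled that the weight of $f_{r,c_1}$ grows with $r$. With the correct weight, $f_{3,c_1}/\eta^9$ has weight $3-\frac{9}{2}=-\frac{3}{2}$, as claimed. More simply, cite directly that $(h^{VW}_{3,c_1})_{c_1}$ is a vector-valued mock modular form of weight $-\frac{3}{2}$, as the paper does.

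\textbf{The Step 2 reindexing argument does not work as written.} Under $E\mapsto E\otimes\mathcal{O}(1)$ the second Chern class changes by the integer $(r-1)c_1+\binom{r}{2}$. This adds to the shift $c_1+\frac{r}{2}$ of $\frac{c_1^2}{2r}$ rather than cancelling it. For odd $r$, no integer shift could cancel an element of $\ZZ+\frac{r}{2}$ anyway. The quantity that shifts by $-(c_1+\frac{r}{2})$ is $-\mathrm{ch}_2=c_2-\frac{c_1^2}{2}$. So $h_{r,c_1}=h_{r,c_1+r}$ holds for Manschot's normalisation $q^{c_2-\frac{r-1}{2r}c_1^2-\frac{r}{8}}$. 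This is also the normalisation under which the quoted formulas hold: for example, the exponents of $h_{2,1}=f_{2,1}/\eta^6$ lie in $\ZZ-\frac{1}{2}$. With the exponent taken literally from \eqref{eq: vw generating}, one gets $q^{c_1^2/2}$ times Manschot's series, and the periodicity fails. Carry out the reindexing and the identification with Manschot's vector in that normalisation, applied to both sides of Theorem~\ref{thm:logGW/VW generating}; the equality there is termwise, so it survives the change.
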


More generally, we expect mock modularity of generating series of Vafa–Witten invariants to translate into the following conjecture for logarithmic Gromov–Witten invariants on the mirror to $\PP^2$:

\begin{conjecture}
\label{conj: mirror P2}
Let $(Y,D)$ be the rational elliptic surface mirror to $\PP^2$ as in \S\ref{sec:rational elliptic}.
For any $r \in \ZZ_{> 0}$, the vector $(h^{GW}_{r,c_1}(\tau))_{-r< c_1 \leq 0}$, with components the log Gromov--Witten generating series of $(Y,D)$ given in \eqref{Eq:log GW generating}, is a vector-valued mock modular form of weight $-\frac{3}{2}$ and depth $r$.
\end{conjecture}

\bibliographystyle{plain}
\bibliography{bibliography}

\begin{thebibliography}{10}

\bibitem{logGWbyAC}
Dan Abramovich and Qile Chen.
\newblock Stable logarithmic maps to {D}eligne-{F}altings pairs {II}.
\newblock {\em Asian Journal of Mathematics}, 18(3):465--488, 2014.

\bibitem{ACGS}
Dan Abramovich, Qile Chen, Mark Gross, and Bernd Siebert.
\newblock {\em Punctured logarithmic maps}, volume~15 of {\em Memoirs of the
  European Mathematical Society}.
\newblock EMS Press, Berlin, 2025.

\bibitem{alexandrov2025mock}
Sergei Alexandrov.
\newblock Mock modularity at work, or black holes in a forest.
\newblock {\em arXiv preprint arXiv:2505.02572}, 2025.

\bibitem{AP}
Sergei Alexandrov and Boris Pioline.
\newblock Black holes and higher depth mock modular forms.
\newblock {\em Comm. Math. Phys.}, 374(2):549--625, 2020.

\bibitem{ABquiverscurves}
H\"ulya Arg\"uz and Pierrick Bousseau.
\newblock Quivers and curves in higher dimension.
\newblock {\em Trans. Amer. Math. Soc.}, 378(1):389--420, 2025.

\bibitem{HDTV}
H\"ulya Arg\"uz and Mark Gross.
\newblock The higher-dimensional tropical vertex.
\newblock {\em Geom. Topol.}, 26(5):2135--2235, 2022.

\bibitem{AKO}
Denis Auroux, Ludmil Katzarkov, and Dmitri Orlov.
\newblock Mirror symmetry for del {P}ezzo surfaces: vanishing cycles and
  coherent sheaves.
\newblock {\em Invent. Math.}, 166(3):537--582, 2006.

\bibitem{Bousseau-Takahashi}
Pierrick Bousseau.
\newblock A proof of {N}. {T}akahashi's conjecture for {$(\Bbb P^2,E)$} and a
  refined sheaves/{G}romov-{W}itten correspondence.
\newblock {\em Duke Math. J.}, 172(15):2895--2955, 2023.

\bibitem{bring2}
Kathrin Bringmann, Jonas Kaszi\'an, and Larry Rolen.
\newblock Indefinite theta functions arising in {G}romov-{W}itten theory of
  elliptic orbifolds.
\newblock {\em Camb. J. Math.}, 6(1):25--57, 2018.

\bibitem{bring1}
Kathrin Bringmann, Larry Rolen, and Sander Zwegers.
\newblock On the modularity of certain functions from the {G}romov-{W}itten
  theory of elliptic orbifolds.
\newblock {\em R. Soc. Open Sci.}, 2:150310, 12, 2015.

\bibitem{CPS}
Michael Carl, Max Pumperla, and Bernd Siebert.
\newblock A tropical view on {L}andau-{G}inzburg models.
\newblock {\em Acta Math. Sin. (Engl. Ser.)}, 40(1):329--382, 2024.

\bibitem{Chen}
Qile Chen.
\newblock Stable logarithmic maps to {D}eligne-{F}altings pairs {I}.
\newblock {\em Annals of Mathematics}, pages 455--521, 2014.

\bibitem{DPW}
Atish Dabholkar, Pavel Putrov, and Edward Witten.
\newblock Duality and mock modularity.
\newblock {\em SciPost Phys.}, 9(5):Paper No. 072, 45, 2020.

\bibitem{dijkgraaf1995mirror}
Robert Dijkgraaf.
\newblock Mirror symmetry and elliptic curves.
\newblock In {\em The Moduli Space of Curves}, volume 129 of {\em Progress in
  Mathematics}, pages 149--163. Birkhäuser / Springer, 1995.

\bibitem{DrezetPotier}
Jean-Marc Drezet and Joseph Le~Potier.
\newblock Fibr\'es stables et fibr\'es exceptionnels sur {${\bf P}_2$}.
\newblock {\em Ann. Sci. \'Ecole Norm. Sup. (4)}, 18(2):193--243, 1985.

\bibitem{Gieseker}
David Gieseker.
\newblock On the moduli of vector bundles on an algebraic surface.
\newblock {\em Ann. of Math. (2)}, 106(1):45--60, 1977.

\bibitem{GoreskyMacpherson}
Mark Goresky and Robert MacPherson.
\newblock Intersection homology theory.
\newblock {\em Topology}, 19(2):135--162, 1980.

\bibitem{gottsche}
Lothar G\"ottsche.
\newblock The {B}etti numbers of the {H}ilbert scheme of points on a smooth
  projective surface.
\newblock {\em Math. Ann.}, 286(1-3):193--207, 1990.

\bibitem{GKrefined}
Lothar G\"ottsche and Martijn Kool.
\newblock Refined {$\rm SU(3)$} {V}afa-{W}itten invariants and modularity.
\newblock {\em Pure Appl. Math. Q.}, 14(3-4):467--513, 2018.

\bibitem{GKL}
Lothar G\"ottsche, Martijn Kool, and Ties Laarakker.
\newblock {${\rm SU}(r)$} {V}afa-{W}itten invariants, {R}amanujan's continued
  fractions, and cosmic strings.
\newblock {\em Michigan Math. J.}, 75(1):3--63, 2025.

\bibitem{GGLP}
Antonella Grassi, Giulia Gugiatti, Wendelin Lutz, and Andrea Petracci.
\newblock Reflexive polygons and rational elliptic surfaces.
\newblock {\em Rend. Circ. Mat. Palermo (2)}, 72(6):3185--3221, 2023.

\bibitem{GHK}
Mark Gross, Paul Hacking, and Sean Keel.
\newblock Mirror symmetry for log {C}alabi-{Y}au surfaces {I}.
\newblock {\em Publ. Math. Inst. Hautes \'Etudes Sci.}, 122:65--168, 2015.

\bibitem{GHS}
Mark Gross, Paul Hacking, and Bernd Siebert.
\newblock Theta functions on varieties with effective anti-canonical class.
\newblock {\em Mem. Amer. Math. Soc.}, 278(1367):xii+103, 2022.

\bibitem{GPS}
Mark Gross, Rahul Pandharipande, and Bernd Siebert.
\newblock The tropical vertex.
\newblock {\em Duke Math. J.}, 153(2):297--362, 2010.

\bibitem{GSannals}
Mark Gross and Bernd Siebert.
\newblock From real affine geometry to complex geometry.
\newblock {\em Ann. of Math. (2)}, 174(3):1301--1428, 2011.

\bibitem{logGW}
Mark Gross and Bernd Siebert.
\newblock Logarithmic {G}romov-{W}itten invariants.
\newblock {\em Journal of the American Mathematical Society}, 26(2):451--510,
  2013.

\bibitem{GSintrinsic}
Mark Gross and Bernd Siebert.
\newblock Intrinsic mirror symmetry.
\newblock {\em J. Amer. Math. Soc.}, 39(2):313--451, 2026.

\bibitem{HuybrechtsLehn}
Daniel Huybrechts and Manfred Lehn.
\newblock {\em The geometry of moduli spaces of sheaves}.
\newblock Cambridge Mathematical Library. Cambridge University Press,
  Cambridge, second edition, 2010.

\bibitem{JK}
Yunfeng Jiang and Martijn Kool.
\newblock Twisted sheaves and {${\rm SU}(r)/\Bbb Z_r$} {V}afa-{W}itten theory.
\newblock {\em Math. Ann.}, 382(1-2):719--743, 2022.

\bibitem{JoyceSong}
Dominic Joyce and Yinan Song.
\newblock A theory of generalized {D}onaldson-{T}homas invariants.
\newblock {\em Mem. Amer. Math. Soc.}, 217(1020):iv+199, 2012.

\bibitem{KirwanWoolf}
Frances Kirwan and Jonathan Woolf.
\newblock {\em An introduction to intersection homology theory}.
\newblock Chapman \& Hall/CRC, Boca Raton, FL, second edition, 2006.

\bibitem{klyachko}
Alexander~A. Klyachko.
\newblock Moduli of vector bundles and numbers of classes.
\newblock {\em Funktsional. Anal. i Prilozhen.}, 25(1):81--83, 1991.

\bibitem{KS}
Maxim Kontsevich and Yan Soibelman.
\newblock Affine structures and non-{A}rchimedean analytic spaces.
\newblock In {\em The unity of mathematics}, volume 244 of {\em Progr. Math.},
  pages 321--385. Birkh\"auser Boston, Boston, MA, 2006.

\bibitem{Lau}
Siu-Cheong Lau and Jie Zhou.
\newblock Modularity of open {G}romov-{W}itten potentials of elliptic
  orbifolds.
\newblock {\em Commun. Number Theory Phys.}, 9(2):345--386, 2015.

\bibitem{JL}
Jun Li.
\newblock Stable morphisms to singular schemes and relative stable morphisms.
\newblock {\em J. Differential Geom.}, 57(3):509--578, 2001.

\bibitem{manschot1}
Jan Manschot.
\newblock The {B}etti numbers of the moduli space of stable sheaves of rank 3
  on {$\Bbb P^2$}.
\newblock {\em Lett. Math. Phys.}, 98(1):65--78, 2011.

\bibitem{manschot2}
Jan Manschot.
\newblock Sheaves on {$\Bbb P^2$} and generalized {A}ppell functions.
\newblock {\em Adv. Theor. Math. Phys.}, 21(3):655--681, 2017.

\bibitem{Manschot}
Jan Manschot.
\newblock Vafa-{W}itten theory and iterated integrals of modular forms.
\newblock {\em Comm. Math. Phys.}, 371(2):787--831, 2019.

\bibitem{Maruyama1}
Masaki Maruyama.
\newblock Moduli of stable sheaves. {I}.
\newblock {\em J. Math. Kyoto Univ.}, 17(1):91--126, 1977.

\bibitem{MaruyamaII}
Masaki Maruyama.
\newblock Moduli of stable sheaves. {II}.
\newblock {\em J. Math. Kyoto Univ.}, 18(3):557--614, 1978.

\bibitem{meinhardt2015donaldson}
Sven Meinhardt.
\newblock Donaldson-thomas invariants vs. intersection cohomology for
  categories of homological dimension one.
\newblock {\em arXiv preprint arXiv:1512.03343}, 2015.

\bibitem{OPelliptic}
Georg Oberdieck and Aaron Pixton.
\newblock Holomorphic anomaly equations and the {I}gusa cusp form conjecture.
\newblock {\em Invent. Math.}, 213(2):507--587, 2018.

\bibitem{OP}
Georg Oberdieck and Aaron Pixton.
\newblock Gromov-{W}itten theory of elliptic fibrations: {J}acobi forms and
  holomorphic anomaly equations.
\newblock {\em Geom. Topol.}, 23(3):1415--1489, 2019.

\bibitem{OkPa1}
Andrey Okounkov and Rahul Pandharipande.
\newblock Gromov-{W}itten theory, {H}urwitz theory, and completed cycles.
\newblock {\em Ann. of Math. (2)}, 163(2):517--560, 2006.

\bibitem{OkPa2}
Andrey Okounkov and Rahul Pandharipande.
\newblock Virasoro constraints for target curves.
\newblock {\em Invent. Math.}, 163(1):47--108, 2006.

\bibitem{SYZ}
Andrew Strominger, Shing-Tung Yau, and Eric Zaslow.
\newblock Mirror symmetry is {$T$}-duality.
\newblock {\em Nuclear Phys. B}, 479(1-2):243--259, 1996.

\bibitem{TT}
Yuuji Tanaka and Richard~P. Thomas.
\newblock Vafa-{W}itten invariants for projective surfaces {II}: semistable
  case.
\newblock {\em Pure Appl. Math. Q.}, 13(3):517--562, 2017.

\bibitem{TT1}
Yuuji Tanaka and Richard~P. Thomas.
\newblock Vafa-{W}itten invariants for projective surfaces {I}: stable case.
\newblock {\em J. Algebraic Geom.}, 29(4):603--668, 2020.

\bibitem{TY}
R.~P. Thomas and S.-T. Yau.
\newblock Special {L}agrangians, stable bundles and mean curvature flow.
\newblock {\em Comm. Anal. Geom.}, 10(5):1075--1113, 2002.

\bibitem{Thomas}
Richard~P. Thomas.
\newblock A holomorphic {C}asson invariant for {C}alabi-{Y}au 3-folds, and
  bundles on {$K3$} fibrations.
\newblock {\em J. Differential Geom.}, 54(2):367--438, 2000.

\bibitem{vafawitten}
Cumrun Vafa and Edward Witten.
\newblock A strong coupling test of {$S$}-duality.
\newblock {\em Nuclear Phys. B}, 431(1-2):3--77, 1994.

\bibitem{GRS}
Michel van Garrel, Helge Ruddat, and Bernd Siebert.
\newblock Intrinsic enumerative mirror symmetry: {T}akahashi's log mirror
  symmetry for {$(\Bbb P^2,E)$} revisited.
\newblock {\em J. Math. Study}, 58(4):575--609, 2025.

\bibitem{yoshioka}
Kota Yoshioka.
\newblock The {B}etti numbers of the moduli space of stable sheaves of rank
  {$2$} on {$\bold P^2$}.
\newblock {\em J. Reine Angew. Math.}, 453:193--220, 1994.

\bibitem{zagier}
Don Zagier.
\newblock Nombres de classes et formes modulaires de poids {$3/2$}.
\newblock {\em C. R. Acad. Sci. Paris S\'er. A-B}, 281(21):Ai, A883--A886,
  1975.

\bibitem{zwegers2008mock}
Sander Zwegers.
\newblock Mock theta functions.
\newblock {\em arXiv preprint arXiv:0807.4834}, 2008.

\end{thebibliography}

\end{document}